\newtheorem*{theorem}{Main Theorem}
\newtheorem*{corollary}{Corollary}
\newtheorem{lemma}{Lemma}
\theoremstyle{remark}
\newtheorem*{remark*}{Remark}
\newtheorem{remark}{Remark}
\theoremstyle{definition}
\numberwithin{equation}{section}
\newcommand{\vertiii}[1]{{\left\vert\kern-0.25ex\left\vert\kern-0.25ex\left\vert #1 
		\right\vert\kern-0.25ex\right\vert\kern-0.25ex\right\vert}}
\newcounter{smallromans}
\newenvironment{romanenumerate}
{\begin{list}{{\normalfont\textrm{(\roman{smallromans})}}}%
		{\usecounter{smallromans}\setlength{\itemindent}{0cm}%
			\setlength{\leftmargin}{5.5ex}\setlength{\labelwidth}{5.5ex}%
			\setlength{\topsep}{.5ex}\setlength{\partopsep}{.5ex}%
			\setlength{\itemsep}{0.1ex}}}%
	{\end{list}}
\newcounter{smallromansdash}
\newcounter{bigromans} 
	{\end{list}}
\title[Kalton's constant is not smaller than 3]{Approximate modularity: Kalton's constant\\ is not smaller than 3}
\author{Micha{\l} Gnacik}
\address{School of Mathematics and Physics, Lion Gate Building, Lion Terrace, University of Portsmouth, Portsmouth, United Kingdom}
\email{michal.gnacik@port.ac.uk}
\author{Marcin Guzik}
\address{UBS Business Solutions, Krakowska 280, 32-080 Zabierz\'{o}w, Poland}
\email{marcin.guzik@ubs.com}
\author{Tomasz Kania}
\address{Institute of Mathematics, Czech Academy of Sciences, \v{Z}itn\'{a} 25, 115~67 Prague 1, Czech Republic and Institute of Mathematics, Jagiellonian University, {\L}ojasiewicza 6, 30-348 Krak\'{o}w, Poland}
\email{kania@math.cas.cz, tomasz.marcin.kania@gmail.com}
\date{\today}
\subjclass[2010]{
	28A60 (primary), and 39B82, 90C27, 94C10 (secondary).}
\keywords{1-additive set function, Kalton's constant, Ulam--Hyers stability, approximate modularity}
\begin{document}
	\maketitle
	\begin{abstract}
		Kalton and Roberts [\emph{Trans.~Amer.~Math.~Soc.}, 278 (1983), 803--816] proved that there exists a universal constant $K\leqslant 44.5$ such that for every set algebra $\mathcal{F}$ and every 1-additive function $f\colon \mathcal{F}\to \mathbb R$ there exists a finitely-additive signed measure $\mu$ defined on $\mathcal{F}$ such that $|f(A)-\mu(A)|\leqslant K$ for any $A\in \mathcal{F}$. The only known lower bound for the optimal value of $K$ was found by Pawlik [\emph{Colloq.~Math.}, 54 (1987), 163--164], who proved that this constant is not smaller than $1.5$; we improve this bound to $3$ already on a non-negative 1-additive function.
		
		\noindent    
	\end{abstract}
	
	\section{Introduction}
	
	Let $\Omega$ be a set, $\mathcal{F}$ be a set algebra over $\Omega$, and $\Delta \geqslant 0$. A function $f \colon \mathcal{F} \rightarrow \mathbb{R}$ is $\Delta$-\emph{additive}, whenever $f(\varnothing) = 0$ and 
	$$|f(A)+f(B) - f(A \cup B) | \leqslant \Delta\quad (A,B\in \mathcal{F}, A\cap B=\varnothing).$$
	Quite clearly, 0-additive maps are nothing but signed, finitely-additive measures on $\mathcal{F}$. For brevity, we refer to $0$-additive functions as additive. Kalton and Roberts proved in \cite{KRo} a rather surprising stability theorem for $\Delta$-additive maps, which asserts that there exists a universal constant (we follow Pawlik's convention \cite{Paw} and refer to it as \emph{Kalton's constant}) $K \leqslant 44.5$ (independent of the choice
	of $\mathcal{F})$ such that for every $\Delta$-additive function $f\colon \mathcal{F}\to \mathbb R$ there exists a (signed, finitely-additive) measure $\mu\colon \mathcal{F}\to \mathbb R$ such that
	\begin{equation}\label{eq: kalton_add}\sup_{A\in \mathcal{F}} |f(A) - \mu(A)| \leqslant K\cdot \Delta. \end{equation}
	In 2014, Bondarenko, Prymak, and Radchenko decreased the upper bound for $K$ from 44.5 to 38.8 (see \cite[Proof of Corollary 1.2]{Bondarenko}).\smallskip
	
	Results of this kind (that is, including ours) are of importance in Functional Analysis, for example, in the theory of twisted sums of (quasi-)Banach spaces and certain stability problems of vector measures \cite{KRo, kochanek}. Improving Kalton's constant may likely fine-tune various optimisation algorithms in machine learning and algorithmic game theory (see \cite[Section 1.2]{Feige} and references therein for more details). Moreover, recently there have been efforts to extend the validity of the Kalton--Roberts theorem to lattices \cite{badora}.\medskip
	
	An analogous and closely related stability problem for $\varepsilon$-modular ($\varepsilon >0$) set functions was recently studied by Feige, Felfman, and Talgam-Cohen \cite{Feige}. 
	In order to present it, we require a~piece of notation.\smallskip
	
	Let $m \in \mathbb{N}$ (we are reserving $n$ for a different purpose), assume that $\Omega_m= \{1, 2, \ldots, m\}$  (to fit the setting in  \cite{Feige}), set $\mathcal{F}:=  2^{\Omega_m}$ and let $f\colon\mathcal{F} \rightarrow \mathbb{R}$ be a function such that $f(\varnothing) = 0$, where $2^{\Omega_m}$ denotes the power set of $\Omega_m$. Then, $f$ is additive (that is, it is a~finitely-additive signed measure), if and only if, it satisfies the \emph{modular identity}:
	$$ f(A) + f(B) = f(A \cup B) + f(A \cap B) \quad (A, B \in \mathcal{F}).$$ 
	Functions that assume a~possibly non-zero value at the empty set and that satisfy the modular identity are for this reason called \emph{modular}. For $\varepsilon >0$, a function $f\colon \mathcal{F} \rightarrow \mathbb{R}$ is then termed 
	\emph{$\varepsilon$-modular}, whenever
	\begin{equation} \label{eq: mod}|f(A)+f(B) - f(A \cup B) - f(A \cap B) | \leqslant \varepsilon \quad (A, B \in \mathcal{F}).\end{equation}
	Also, $f$ is said to be  \emph{weakly}-$\varepsilon$-\emph{modular}, whenever (\ref{eq: mod}) is satisfied for every sets $A$, $B$ so that $A \cap B = \varnothing$, in particular, if $f(\varnothing)=0$, then the properties of
	being weakly-$\varepsilon$-modular and $\varepsilon$-additive are equivalent. Moreover, every weakly-$\varepsilon$-modular function is $2\varepsilon$-modular (see \cite[Proposition 2.1]{Feige}). \smallskip
	
	The main results in \cite{Feige} state that
	there are universal constants  $K_s < 12.62$ (\emph{the strong Kalton constant}) and $K_w < 24$ (\emph{the weak Kalton constant}) so that for every $\varepsilon$-modular function $f$ there is a modular function $\nu_1$ with
	\begin{equation}\label{eq: kalton_mod}\sup_{A \in \mathcal{F}}|f(A) - \nu_1(A)| \leqslant \varepsilon K_s,\end{equation}
	and for every weakly-$\varepsilon$-modular function $h$ there is a modular function $\nu_2$ with \begin{equation}
	\label{eq: kalton_wadd}
	\sup_{A \in \mathcal{F}}|h(A) - \nu_2(A)| \leqslant \varepsilon K_w.\end{equation}
	It is also worth emphasising the inequalities between $K_s$ and $K_w$ \cite[Corollary 2.7]{Feige}), namely
	\begin{equation}\label{eqn:kalt_eqn} \tfrac{1}{2}K_w \leqslant K_s \leqslant K_w.\end{equation}
	
	Here, the constants $K_w$ and $K_s$ are depending on $m$, and so we also write $K_w(m) \equiv K_w$ and $K_s(m) \equiv K_s$.  
	
	\begin{remark}[Inequality between Kalton's constants]\label{remconst}
		Here, we draw a clear picture of inequalities between all Kalton's constants $K$, $K_w$ and $K_s$. Denote by $K(m)$ the optimal Kalton constant for 1-additive functions defined on $2^{\Omega_m}$, to emphasize $m$ dependence. \smallskip 
		Clearly, if $f$ is $\varepsilon$-additive, then it is weakly-$\varepsilon$-modular and the converse is not true as $f(\varnothing)$ may be non-zero. However if $f$ is weakly-$\varepsilon$-modular and $f(\varnothing) =a \neq 0$, then by shifting, we get $g=f - a\cdot \mathds{1}_{\mathcal{F}}$, which is $\varepsilon$-additive, as for any $A,B\in \mathcal{F}$ we have \begin{align*}| g(A) + g(B) - g(A \cup B) | = &
		| f(A)-a + f(B)-a - f(A \cup B)+a|\\
		= &
		| f(A) + f(B) - f(A \cup B)- f(A \cap B)|
		\\\leqslant & \varepsilon.
		\end{align*}
		Similarly, for any modular function $\nu$ with $\nu(\varnothing) = b \neq 0$ one can construct an additive function by setting $\mu = \nu -b\cdot \mathds{1}_{\mathcal{F}}$.
		
		Let $g$ be weakly-$\varepsilon$-modular so that $g(\varnothing)=a \neq 0$. Set $f= g -a\mathds{1}_{\mathcal{F}}$ and note that it is $\varepsilon$-additive. There is an additive function $\mu$ so that
		$$ \sup_{A \in \mathcal F}| f(A) - \mu(A)| \leqslant \varepsilon K(m).$$
		Let $\nu_a = \mu + a\mathds{1}_{\mathcal{F}}$ then  $\nu_a$ is modular and
		$\nu_a(\varnothing) =  a$ also 
		$$\sup_{A} | g(A) - \nu_a(A)| = \sup_{A}| f(A)+a - \mu(A)-a| = \sup_{A}|f(A) - \mu(A) | \leqslant K(m)\varepsilon.$$
		Thus, $K_w(m) \leqslant K(m)$. Now, let $f$ be $\varepsilon$-additive, so it is weakly-$\varepsilon$-modular. Consequently, there is a modular function $\nu$ so that 
		$\sup_{A \in \mathcal F} |f(A) - \nu(A)| \leqslant \varepsilon K_w(m)$. Assume that $\nu(\varnothing) = c \neq 0$. Then $\mu = \nu - c \mathds{1}_{\mathcal{F}}$
		is additive and
		$$|f(A) - \mu(A)| \leqslant |f(A) -\nu(A) + c | \leqslant |f(A) - \nu(A)| + |f(\varnothing) - c| \leqslant 2 K_w(m).$$
		Thus, $K(m) \leqslant 2 K_w(m)$. Hence, we have the following inequalities between Kalton's constants
		\begin{equation}\label{eqn:triple_k}\tfrac{1}{2}K_w(m) \leqslant K_s(m) \leqslant K_w(m) \leqslant K(m) \leqslant 2K_w(m).\end{equation}

	\end{remark}
	
	\subsection*{Lower bounds}
	
	The results concerning estimating  $K$, $K_w$ and $K_s$ from below have been so far rather scarce. In 1987, Pawlik  published a~paper \cite{Paw}, where Kalton's constant $K$ was estimated from below by 3/2.
	Recently, his result has been reviewed in \cite[Appendix A, Appendix C]{Feige}. Moreover, Feige \emph{et al.}~have proved that  $K_s \geqslant 1$ \cite[Theorem 1.2]{Feige}. 

	\section{Main results}
	The aim of the present paper is to improve known lower bounds on Kalton's constant by obtaining the following inequality.
	
	\begin{theorem}\label{main1}$K \geqslant 3$. 
	\end{theorem}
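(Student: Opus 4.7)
The plan is to exhibit, for each $\varepsilon>0$, a finite ground set $\Omega_m$ and a non-negative 1-additive function $f\colon 2^{\Omega_m}\to\mathbb{R}$ such that
\[
\inf_{\mu}\ \sup_{A\subseteq\Omega_m}|f(A)-\mu(A)| \;\geq\; 3-\varepsilon,
\]
the infimum being taken over all finitely-additive signed measures $\mu$ on $2^{\Omega_m}$. Passing to the limit $\varepsilon\to 0$ then yields $K\geqslant 3$.

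First, I would cast the approximation problem in linear-programming form. A signed measure on a finite power set is determined by its values $x_i=\mu(\{i\})$ on singletons, so the problem is the $\ell^{\infty}$-fitting problem $\min_{(x_i)}\max_{A}\bigl|f(A)-\sum_{i\in A}x_i\bigr|$. Standard LP duality identifies this optimum with
\[
\max\Bigl\{\sum_{A}z_A\,f(A)\;:\;\sum_{A\ni i}z_A=0\text{ for every }i\in\Omega_m,\ \sum_{A}|z_A|\leqslant 1\Bigr\}.
\]
Hence it suffices to produce an $f$ together with admissible dual weights $(z_A)$ witnessing the value $3-\varepsilon$.

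Second, I would look for $f$ built from a symmetric combinatorial design on $\Omega_m$: two families $\mathcal{A}^{+}$ and $\mathcal{A}^{-}$ of subsets, on which $f$ takes values separated by roughly $3$, while their (weighted) point-incidence vectors coincide so that the constraint $\sum_{A\ni i}z_A=0$ is satisfied by taking $z_A$ positive on $\mathcal{A}^{+}$ and negative on $\mathcal{A}^{-}$. The values of $f$ on all other subsets would be filled in so that (i)~$f\geqslant 0$ everywhere, and (ii)~$|f(A)+f(B)-f(A\cup B)|\leqslant 1$ for every disjoint pair $A,B\in 2^{\Omega_m}$. A natural candidate is a hierarchical refinement of Pawlik's $\tfrac{3}{2}$-example, engineered so that the local discrepancies accumulate simultaneously in two opposing directions rather than only one. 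Verification of 1-additivity will then be a finite case-analysis following how $(A,B)$ interacts with the design, and the lower bound on $\sum_A z_A f(A)$ will be a direct count once the design is in place.

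The main obstacle is the construction itself. Pawlik's example saturates the separation one obtains by pushing $f$ away from any measure in a single direction; to reach the full separation of $3$ while retaining non-negativity one cannot simply symmetrise around zero (that would destroy $f\geqslant 0$). Instead, one must find a design where the 1-additivity constraints, which couple each set in $\mathcal{A}^{+}$ to many sets in $\mathcal{A}^{-}$ through intermediate unions and complements, are all simultaneously tight in a consistent way. Getting the dual value strictly above $\tfrac{3}{2}$ and, in the limit, up to $3$ without violating either non-negativity or the $\Delta=1$ inequality is where the real work lies.
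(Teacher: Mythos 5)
There is a genuine gap: your write-up is a programme, not a proof. The LP-duality reduction (distance from $f$ to the space of measures equals the optimum of the dual problem $\max\{\sum_A z_Af(A): \sum_{A\ni i}z_A=0,\ \sum_A|z_A|\leqslant 1\}$ on a finite power set) is correct and standard, but the entire content of the theorem lies in exhibiting a concrete $1$-additive $f$ and certifying that its distance to measures approaches $3$; you explicitly defer both the construction and its verification (``the main obstacle is the construction itself\dots this is where the real work lies''), so nothing in the proposal actually establishes $K\geqslant 3$. Moreover, the guiding heuristic you give points in a direction that caps out too early: if $f$ takes values in an interval of length roughly $3$ and you take a \emph{symmetric} design with the dual mass split between a positive family $\mathcal A^{+}$ and a negative family $\mathcal A^{-}$ whose incidence vectors match, then $\sum_A z_Af(A)$ is bounded by about half the separation, i.e.\ by roughly $\tfrac32$ --- exactly Pawlik's barrier. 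To get to $3$ you need the negative dual mass (equivalently, the unavoidable contribution of the approximating measure on large sets) to become asymptotically negligible, and your sketch contains no mechanism for that.

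For comparison, the paper's construction does supply such a mechanism. Take $\Omega_{k,n}=X_1\cup\dots\cup X_n$ with $|X_j|=k$, and let $f_{k,n}$ equal $3$ on every set meeting all blocks and containing at least one full block, $1$ on all other nonempty sets, and $0$ on $\varnothing$; $1$-additivity is a short direct check. If $\mu_{k,n}$ is an optimal measure, a boundedness argument shows that in each block the smallest singleton mass $\gamma^j_{k,n}=\min_{i\in X_j}|\mu_{k,n}(\{i\})|$ tends to $0$ as $k\to\infty$ (otherwise $|\mu_{k,n}|$ of a large subset of a block would blow up). Averaging the defect over the $n$ sets $X_j\cup\{i_\ell:\ell\neq j\}$, each of $f$-value $3$, gives
\[
K(kn)\;\geqslant\;3-\tfrac1n\,\mu_{k,n}(\Omega_{k,n})-\tfrac1n\sum_{j=1}^n\sum_{\ell\neq j}\gamma^{\ell}_{k,n},
\]
and letting $k\to\infty$ and then $n\to\infty$ kills both correction terms (the first is at most $M/n$ with $M$ a uniform bound on $|\mu_{k,n}|$), yielding $K\geqslant 3$. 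Note that the ``negative mass'' here is only the single term $\mu_{k,n}(\Omega_{k,n})/n$, which vanishes because the value $3$ is realised on $n$ essentially disjoint witnesses --- this is the asymmetry your symmetric-design plan is missing. Unless you supply a concrete family of functions with a verified dual certificate of value tending to $3$, the proposal does not prove the statement.
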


	In order to prove the Main Theorem, we require the following fact. Let $\mathcal{F}_m:=2^{\Omega_m}$, that is, the power set of an $m$-element set (so that $\mathcal{F}_m$ has $2^m$ elements) and denote by $K(m)$ the optimal Kalton constant for 1-additive functions defined on $\mathcal{F}_m$ only. Then the sequence $(K(m))_{m=1}^\infty$ is increasing and $$K = \lim_{m\to \infty} K(m) = \sup_{m\in \mathbb N} K(m).$$
	(This follows from a standard compactness argument; see the first paragraph of the proof of \cite[Theorem 4.1]{KRo} for details.) In other words, it is sufficient to work with finite set algebras.\smallskip
	\begin{remark}It is clear that both sequences $(K_w(m))_{m=1}^{\infty}$, $(K_s(m))_{m=1}^{\infty}$ are increasing.
		As $K$  is the limit of $K(m)$ as $m\to \infty$, we have
		\begin{equation}\label{eqn:triple_knon}\ K_w(m) \leqslant K(m) \leqslant \sup_{m \in \mathbb{N}} K(m) = K,\end{equation}
		then, by monotone convergence theorem,
		the sequence $(K(m))_{m \in \mathbb{N}}$ is convergent, and similarly $K_w := \lim_{m \to \infty}K_w(m) = \sup_{m \in \mathbb{N}}K_w(m)$ similarly, as
		$K_s(m) \leqslant K_w(m)$. Then, 
		$K_s := \lim_{m \to \infty}K_s(m)=\sup_{m \in \mathbb{N}}K_s(m)$. This leads to the estimates
		\begin{equation}\label{eqn:triple_knon}\tfrac{1}{2}K_w \leqslant K_s \leqslant K_w\leqslant K\leqslant 2K_w,\end{equation}
		which show that indeed there is no dependence on $m$. 
	\end{remark}
	As an immediate corollary to our Main Theorem and (\ref{eqn:triple_knon}), we obtain the lower bound for $K_w= \lim_{m \to \infty}K_w(m)$ (see also \cite[Theorem 1.3]{Feige}, where it is proved that already $K_w(20) \geqslant \tfrac{3}{2}$).
	\begin{corollary}
		$K_w \geqslant \frac{3}{2}$.
	\end{corollary}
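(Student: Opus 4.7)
The plan is to obtain the bound $K_w\geqslant \tfrac{3}{2}$ as a direct corollary of the Main Theorem by invoking the chain of inequalities (\ref{eqn:triple_knon}). Specifically, I would extract from (\ref{eqn:triple_knon}) only the rightmost inequality $K\leqslant 2K_w$, and combine it with $K\geqslant 3$ from the Main Theorem to conclude $K_w\geqslant K/2\geqslant 3/2$.

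Since the proof reduces to a single line, there is essentially no obstacle beyond ensuring that the constants appearing in the inequality $K\leqslant 2K_w$ really are the limiting (supremum over $m$) constants rather than the $m$-dependent ones. This is precisely what the preceding Remark establishes: it shows that the per-$m$ inequalities $K(m)\leqslant 2K_w(m)$ pass to the limit because all three sequences $(K(m))$, $(K_w(m))$, $(K_s(m))$ are increasing and bounded, so the suprema coincide with the limits. With that legitimacy in place, the estimate is immediate.

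If one wanted to rewrite the argument without referring back to Remark 1, the key step to reproduce is the shifting trick: given a weakly-$\varepsilon$-modular $f$ and a modular approximant $\nu$ with $\nu(\varnothing)=c$, replace $\nu$ by the signed measure $\mu=\nu - c\mathds{1}_{\mathcal{F}}$ and control $|f(A)-\mu(A)|$ by $|f(A)-\nu(A)|+|c|\leqslant 2\varepsilon K_w$, using $f(\varnothing)=0$ to bound $|c|\leqslant \varepsilon K_w$. This yields $K\leqslant 2K_w$ and hence, via the Main Theorem, the desired lower bound $K_w\geqslant 3/2$.
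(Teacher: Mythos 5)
Your proposal is correct and follows exactly the paper's intended route: the corollary is obtained by combining the Main Theorem $K\geqslant 3$ with the inequality $K\leqslant 2K_w$ from the chain \eqref{eqn:triple_knon}, whose proof via the shifting trick (subtracting $\nu(\varnothing)\mathds{1}_{\mathcal{F}}$ and bounding $|\nu(\varnothing)|$ using $f(\varnothing)=0$) you reproduce faithfully. Nothing is missing.
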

	\section{Proof of the main result}
	
	Let $\Omega_{k,n}$ be a set of cardinality $n\cdot k$; we write $\Omega_{k,n}$ as the disjoint union of sets $X_1, \ldots, X_n$ each set having cardinality $k$, where $n, k \geqslant 2$. We define $f_{k,n}$ by setting 
	\begin{itemize}
		\item $f_{k, n}(\varnothing) = 0$;
		\item $f_{k,n}(A) = 3$ for every set $A$ with $A\cap X_j\neq \varnothing$ for all $j\leqslant n$ and
		$A\cap X_j = X_j$ for at least one $j$;
		\item $f_{k,n}(B) = 1$ for all other sets $B$.
	\end{itemize}
	In particular, $f_{k,n}(\Omega_{k,n}) = 3$. It is a matter of direct verification that each function $f_{k,n}$ is $1$-additive and so weakly-$1$-modular, which yields also that $f_{k,n}$ is $2$-modular.
	
	\smallskip
	
	\begin{proof}[Proof of the Main Theorem] Let $\mu_{k,n}$ be a measure that minimises the distance from $f_{k,n}$ to the space of measures on $\Omega_{k,n}$. Choose indices $i_1,\ldots, i_n$ that realise $\gamma^1_{k,n}, \ldots, \gamma^n_{k,n}$, where 
		$$ \gamma^j_{k,n} = \min_{i\in X_j} | \mu_{k,n}(\{i\})|\quad (j = 1, \ldots, n).$$
		We \emph{claim} that for all $j$ and $n$ we have $\gamma^j_{k,n} \to 0$ as $k\to \infty$. Assume not. Then $\gamma^j_{k,n} \geqslant \gamma$ for some $\gamma > 0$ and infinitely many $k$. 
		
		Let $$M = \sup_{k,n} \sup_{A\subseteq \Omega_{k,n}} |\mu_{k,n}(A)|.$$
		If $M = \infty$, the theorem would have been proved, so we may assume that $M$ is finite.  (Of course, it follows  from the Kalton--Roberts theorem that $M\leqslant 44.5+3$, but there is no need to invoke such a deep result here.) As $k$ increases over the chosen infinite set, the number of those $i\in X_j$ for which $\mu_{k,n}(\{i\})$ are either all positive or all negative increases to infinity; let $A_j$ denote the subset of $X_j$ comprising such elements of the same sign. In particular, $$|\mu_{k,n}(A_k)| \geqslant \gamma \cdot |A_k|\to \infty$$ as $k\to \infty$; a~con\-tr\-adiction. \smallskip
		
		Let us note that
		\begin{eqnarray*}n\cdot K(k\cdot n)& \geqslant& n \cdot \sup_{A\subseteq \Omega_{k, n}} |f_{k,n}(A) - \mu_{k,n}(A)|\\
			& \geqslant & \sum_{j=1}^n |f_{k,n}(X_j\cup \{i_\ell\colon \ell \neq j\}) - \mu_{k,n}(X_j) - \mu_{k,n}(\{i_\ell\colon \ell \neq j\})|\\
			& = & \sum_{j=1}^n |3 - \mu_{k,n}(X_j) - \mu_{k,n}(\{i_\ell\colon \ell \neq j\})|\\
			& \geqslant & \sum_{j=1}^n (3 - \mu_{k,n}(X_j) - \mu_{k,n}(\{i_\ell\colon \ell \neq j\}))\\
			& = & 3n - \mu_{k,n}(\Omega_{k,n})  - \sum_{j=1}^n \mu_{k,n}(\{i_\ell\colon \ell \neq j\}). \end{eqnarray*}
		We have  
		$$ K(k\cdot n) \geqslant 3 - \frac{1}{n}\mu_{k, n}(\Omega_{k,n}) - \frac{1}{n}\sum_{j=1}^n \mu_{k,n}(\{i_\ell\colon \ell \neq j\})),$$
		which shows that 
		\begin{eqnarray*} K &\geqslant& 3 - \frac{1}{n} \limsup_{k\to\infty} \mu_{k, n}(\Omega_{k,n}) - \frac{1}{n} \limsup_{k\to\infty} \sum_{j=1}^n \mu_{k,n}(\{i_\ell\colon \ell \neq j\}) \\
			&=& 3 - \frac{1}{n} \limsup_{k\to\infty} \mu_{k, n}(\Omega_{k,n}) - \frac{1}{n} \limsup_{k\to\infty} \sum_{j=1}^n \sum_{\ell \neq j} \gamma^\ell_{k,n} \\
			& = & 3 - \frac{1}{n} \limsup_{k\to\infty} \mu_{k, n}(\Omega_{k,n})\\
			& \geqslant & 3 - \frac{M}{n},
		\end{eqnarray*}
		because $$\sum_{j=1}^n \sum_{\ell \neq j} \gamma^\ell_{k,n}\to 0$$ as $k\to \infty$ (and $n$ is fixed).
		
	\end{proof}

	\section{The constants $K(m)$}\label{Kk}
	The proof of the Main Theorem has an asymptotic nature as it involves all constants $K(m)$ at once. Given the value of $m$, it would be thus desirable to find lower (and upper) estimates for $K(m)$ as well. This can be achieved by estimating the distances of the functions appearing in the proof of the Main Theorem to the space of measures.\smallskip
	
	We start with the following lemma, which asserts that it is always possible to find a~measure minimising the distance to $f_{k,n}$ that is constant on singletons from the respective partitions. (As the supremum norm that we consider here is not strictly convex, there is no guarantee for the uniqueness of the element that minimises a distance to a subspace.)\smallskip 
	
	For $n,k\in \mathbb N$, denote by $\mathcal{F}_{k,n}$ the power-set of $\Omega_{k,n}$ and let $\mathcal{S}_{k,n}$ be the set of all self-bijections of $\Omega_{k,n}$ that leave each set $X_j$ invariant ($j\leqslant n$). Then $\mathcal{S}_{k,n}$ has exactly $(k!)^n$ elements.
	
	\begin{lemma}\label{regular}Let $n,k\in \mathbb N$. Then there exists a measure $\nu$ that minimises the distance from $f_{k,n}$ to the space of measures on $\Omega_{k,n}$ with the property for every $j\leqslant n$ the function $x\mapsto \nu(\{x\})$ is constant on $X_j$.\end{lemma}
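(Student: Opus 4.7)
The plan is a standard symmetrisation argument exploiting the fact that the function $f_{k,n}$ is invariant under the action of the group $\mathcal{S}_{k,n}$. The first observation is that for every $\sigma\in\mathcal{S}_{k,n}$ and every $A\in\mathcal{F}_{k,n}$ we have $f_{k,n}(\sigma(A)) = f_{k,n}(A)$. Indeed, the definition of $f_{k,n}$ depends solely on the pattern formed by the pairs $(|A\cap X_j|,|X_j|)$ for $j\leqslant n$, and any $\sigma\in\mathcal{S}_{k,n}$ leaves each $X_j$ invariant, so it preserves these intersection sizes.

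Next, I would pick any measure $\mu_0$ on $\mathcal{F}_{k,n}$ that minimises $\sup_{A\in\mathcal{F}_{k,n}}|f_{k,n}(A)-\mu(A)|$ over measures $\mu$ (existence follows from the fact that we minimise a continuous convex function on a finite-dimensional space over a closed set where the minimum is attained, exactly as tacitly used in the proof of the Main Theorem). For each $\sigma\in\mathcal{S}_{k,n}$ set $\mu_\sigma(A):=\mu_0(\sigma(A))$; this is again a signed measure, and by the invariance of $f_{k,n}$,
\begin{equation*}
\sup_{A}|f_{k,n}(A)-\mu_\sigma(A)| = \sup_A |f_{k,n}(\sigma^{-1}(A))-\mu_0(A)| = \sup_A |f_{k,n}(A)-\mu_0(A)|,
\end{equation*}
so each $\mu_\sigma$ is also a minimiser. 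Define the averaged measure
\begin{equation*}
\nu := \frac{1}{(k!)^n}\sum_{\sigma\in\mathcal{S}_{k,n}}\mu_\sigma.
\end{equation*}
By the triangle inequality, $\nu$ remains a minimiser of the distance from $f_{k,n}$ to the space of measures.

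It remains to verify the symmetry property. Fix $j\leqslant n$ and let $x,y\in X_j$; pick $\tau\in\mathcal{S}_{k,n}$ with $\tau(x)=y$ (for instance the transposition swapping $x$ and $y$ and fixing all other points). Since $\sigma\mapsto\sigma\tau$ is a bijection of $\mathcal{S}_{k,n}$,
\begin{equation*}
\nu(\{x\}) = \frac{1}{(k!)^n}\sum_{\sigma}\mu_0(\sigma(\{x\})) = \frac{1}{(k!)^n}\sum_{\sigma}\mu_0(\sigma\tau(\{x\})) = \frac{1}{(k!)^n}\sum_{\sigma}\mu_0(\sigma(\{y\})) = \nu(\{y\}),
\end{equation*}
as required. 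There is no real obstacle here; the only point that needs care is the bookkeeping confirming that $f_{k,n}\circ\sigma=f_{k,n}$ for $\sigma\in\mathcal{S}_{k,n}$, which is immediate from the three-case definition of $f_{k,n}$.
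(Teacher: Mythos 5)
Your proposal is correct and follows essentially the same route as the paper: average the minimiser over the group $\mathcal{S}_{k,n}$ and use convexity (triangle inequality) together with the $\mathcal{S}_{k,n}$-invariance of $f_{k,n}$ to see the average is still a minimiser, with constancy on each $X_j$ following from the averaging. You merely spell out two points the paper leaves implicit, namely the invariance $f_{k,n}\circ\sigma=f_{k,n}$ and the reindexing argument giving $\nu(\{x\})=\nu(\{y\})$.
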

	\begin{proof}
		Let $\mu$ be any measure that minimises the distance from $f_{k,n}$ to the space of measures. For any self-bijection $\sigma$ of $\Omega_{k,n}$, the composition $\mu\circ \sigma$ defines a measure again. Let us observe that the measure
		$$\nu = \frac{1}{(k!)^n} \sum_{\sigma \in \mathcal{S}_{k,n}} \mu\circ \sigma$$
		has the desired properties. Indeed, it is clear that the function $x\mapsto \nu(\{x\})$ is constant on the respective sets $X_j$ ($j\leqslant n$) as we consider only bijections that leave each set $X_j$ invariant. Let then prove that $\nu$ also minimises the distance to the space of measures. Indeed, by convexity of balls (here, in $\ell_\infty(\mathcal{F}_{k,n})$), we have
		\begin{eqnarray*}\sup_{A\in \mathcal{F}_{k,n}}\left|f_{k,n}(A)-\frac{1}{\left(k!\right)^n}\sum_{\sigma \in \mathcal{S}}(\mu\circ \sigma)(A)\right| &\leqslant  \sup_{A\in \mathcal{F}_{k,n}}\left|f_{k,n}(A)-\mu(A)\right|.\end{eqnarray*}
		As $\mu$ was chosen to minimise the distance, the proof is complete.
	\end{proof}

	\subsection{The case $n=2$} Let $n=2$, $k\in \mathbb N$, and let $\nu$ be a measure as in the statement of Lemma~\ref{regular}. In that case, $\Omega_{k,2}=X_1\cup X_2$. Denote $x=\nu(X_1)$ and $y=\nu(X_2)$. In this case, we have essentially three types of sets to consider: 
	\begin{itemize}
		\item $X_1\cup\{\omega_2\}$, where $\omega_2\in X_2$,
		\item $X_2\cup\{\omega_1\}$, where $\omega_1\in X_1$,
		\item $\Omega_{k,2} \setminus \{\omega_1,\omega_2\}$, where $\omega_1\in X_1$ and $\omega_2\in X_2$.
	\end{itemize}
	Thus, we seek to minimise the following expressions simultaneously:
	$$\left|x + \frac{y}{k}-3\right|,\; \left|y + \frac{x}{k}-3\right|,\; \left|x+y - \frac{x+y}{k}-1\right|$$
	with respect to $(x,y)$. We then arrive at the following system of equations:
	$$\left\{ \begin{array}{ll}\frac{k-1}{k}(x+y)-1 = 3 - x - \frac{y}{k} \\
	\frac{k-1}{k}(x+y)-1 = 3 - y - \frac{x}{k},\end{array}\right.$$
	which has the unique solution:
	$$\left\{ \begin{array}{l}x = \frac{4k}{3k-1}\\
	y = \frac{4k}{3k-1}.\end{array}\right. $$
	
	In that case, the lower estimates for $K(m)$ are suboptimal as asymptotically they yield  the inequality $K\geqslant 5/3$.\bigskip
	
	\subsection{The case $n\geqslant 3$} Analogously to the case $n=2$, let us denote $x_i = \nu(X_i)$ for $i=1, \ldots, n$. For every $j\leqslant n$, let us pick $\omega_j \in X_j$ ($j\leqslant n$). By Lemma~\ref{regular}, we may restrict our attention to measures that assume equal values on singletons from the respective sets $X_j$. In other words, it is enough to consider the following sets:
	\begin{itemize}
		\item $\Omega_{k,n} \setminus \{\omega_\ell \colon \ell\leqslant n\}$;
		\item $X_j\cup\{\omega_\ell\colon \ell\leqslant n, \ell \neq j\}\quad (j\leqslant n)$.
		
	\end{itemize}
	Thus, this time, we seek to minimise the following expressions simultaneously:
	$$\left|\sum_{\ell \leqslant n} x_\ell - \frac{\sum_{\ell \leqslant n} x_\ell}{k}-1\right|,\; \left|x_j + \sum_{\ell\neq j} \frac{x_\ell}{k}-3\right|\; (j\leqslant n)$$
	with respect to $(x_1, x_2, \ldots, x_n)$. In particular, for $j\leqslant n$, we have \begin{equation}\label{eq1}\frac{k-1}{k}\sum_{\ell\leqslant n} x_\ell - 1 = 3 - x_j - \sum_{\ell\neq j} \frac{x_\ell}{k}.\end{equation}
	The sum $t = \sum_{\ell\leqslant n} x_\ell$ may be then computed by adding these equations together. More specifically, $t = \frac{4nk}{(n+1)k-1}$. It follows from \eqref{eq1} that for any $j\leqslant n$ we have
	$$\frac{k-1}{k} t - 1 = 3 - \frac{k-1}{k}x_j - \frac{t}{k}.$$
	Finally, for every $j\leqslant n$, we have
	$$x_j = \frac{k}{k-1}\Big(4 - \frac{4nk}{(n+1)k - 1}\Big) = \frac{4k}{(n+1)k-1}.$$
	Since the double sequence
	$$a_{k,n} = 3 - \frac{4k}{(n+1)k-1} - \frac{n-1}{k}\frac{4k}{(n+1)k-1}$$
	converges to $3$ as $k,n\to \infty$, we may estimate $K(m)$ from below by $a_{k,n}$, where $k,n$ are such that $m=k\cdot n$.\smallskip
	
	In particular, we could restrict, for example, to $n=k$. In this case, for $n=k=10$, we get approximately $2.305$, for $n=k=20$, it is approximately $2.628$, and for $n=k=200$, we arrive at $2.96$.
	
	\section{Closing remarks}
	Feige, Feldman, and Talgam-Cohen remarked that \emph{obtaining good lower bounds on $K_s$ is also not easy. Part of the difficulty is
		that even if one comes up with a function $f$ that is a~candidate to yield the lower bound,
		verifying that it is $\varepsilon$-modular involves checking roughly $2^{2^n}$ approximate modularity equations} \cite[p.~69]{Feige}.\smallskip
	
	Motivated by the above statement, we have found a suitable candidate for the function(s) $f_{k,2}$ using a~{Python} script, which gave us a lower estimate of $5/3$ for $K$. Subsequently, we added more degrees of freedom (by defining $f_{k,n}$) in an analogous manner. Let us briefly explain our approach, which would probably make the proof of the main result less \emph{ad hoc}.\smallskip
	
	We consider the set $\Omega_{k,2}$ for $k\in \mathbb N$ and $k \geqslant 2$ so that $|\Omega_{k,2}| \geqslant 4$. Let $A = [a_{ij}]_{i,j=1}^3$ be a~real matrix. We then define a function $f\colon \mathcal F_{k,2}\to \mathbb R$ by asserting that 
	
	$$\begin{array}{lll}
	f_k(\varnothing) = 0, &
	f_k(Y^{\prime}) = a_{12}, &  f_k(X_2)= a_{13},\\
	f_k(X^{\prime})= a_{21}, & f_k(X^{\prime} \cup Y^{\prime}) = a_{22}, &
	f_k(X^{\prime} \cup X_2) = a_{23},\\
	f_k(X_1) = a_{31}, & f_k(X_1 \cup Y^{\prime}) = a_{32}, & f_k(X_1 \cup X_2) = a_{33}
	\end{array}$$
	as long as
	and $X^{\prime}$, $Y^{\prime}$ are proper, non-empty subsets of $X_1$ and $X_2$, respectively. 
	
	\begin{lemma}\label{lem: oneadd}
		The function $f$ is 1-additive (weakly-1-modular) if and only if the following conditions are satisfied:
		\begin{romanenumerate}
			\item $|a|\leqslant 1$ for $a \in \{ a_{12}, a_{21}, a_{22}\}$.
			\item $|2a_{12} - a_{13}|\leqslant 1$,
			\item $|2a_{21} - a_{31}| \leqslant 1$, 
			\item $|a_{13}+a_{31}-a_{33}|\leqslant 1$, 
			\item $|a_{12}+a_{21}-a_{22}|\leqslant 1$, 
			\item $|2a_{22}-b| \leqslant 1$ for $b \in \{a_{23}, a_{32}, a_{33}\}$,
			\item $|a_{12}+a_{22}-a_{23}| \leqslant 1$,
			\item $|a_{21}+a_{22}-a_{32}| \leqslant 1$,
			\item $|  a_{33} - c|  \leqslant  1$ for $c \in \{ a_{32} + a_{12}, a_{23} + a_{21}\}$.
		\end{romanenumerate}
		In particular,  $a_{13}, a_{31}, a_{23}, a_{32}, a_{33} \in [-3, 3]$.
	\end{lemma}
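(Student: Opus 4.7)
My plan is to reduce the 1-additivity of $f$ to a finite list of inequalities by exploiting the invariance of $f$ under the bijections of $\Omega_{k,2}$ that preserve the partition $\{X_1, X_2\}$. For any $A \subseteq \Omega_{k,2}$, define its \emph{type} to be $\tau(A) = (i,j) \in \{1,2,3\}^2$, where $i = 1, 2, 3$ according to whether $A \cap X_1$ is empty, a proper non-empty subset, or all of $X_1$, and $j$ is defined analogously relative to $X_2$. The definition of $f$ gives $f(A) = a_{\tau(A)}$ (with the convention $a_{11} = 0$), so the 1-additivity condition reduces to checking, for every realisable triple $(\tau(A), \tau(B), \tau(A\cup B))$ arising from a disjoint pair $A, B$, that $|a_{\tau(A)} + a_{\tau(B)} - a_{\tau(A\cup B)}| \leqslant 1$.

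Next I would enumerate the realisable triples coordinate by coordinate. Within one coordinate, the disjoint transitions $(i_1, i_2) \to i_3$ are $(1,1)\to 1$, $(1,2)\to 2$, $(2,1)\to 2$, $(1,3)\to 3$, $(3,1)\to 3$, $(2,2)\to 2$, and $(2,2)\to 3$; all other combinations violate disjointness. The realisable composite triples then form the product of two such single-coordinate transition lists, minus the trivial cases in which $A$ or $B$ is empty. For each remaining composite triple I would write out the 1-additivity inequality and group these inequalities by the value of $\tau(A\cup B)$, using the symmetry $A \leftrightarrow B$ and the $X_1 \leftrightarrow X_2$ symmetry to collapse duplicates; the result is precisely the nine families (i)--(ix). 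Since each listed inequality arises from an explicit witness, it is necessary, and since every realisable composite triple has been accounted for, it is also sufficient.

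The main obstacle is the bookkeeping: one must keep track of which transitions are actually realisable for a given $k \geqslant 2$ (a few, such as $(2,2)\to 2$ in both coordinates simultaneously, require $k \geqslant 3$, in which case the weaker inequality is absorbed into one involving the $(2,2)\to 3$ transition in the corresponding coordinate), and verify that no composite triple slips outside the nine listed families. The `in particular' assertion is then routine via the triangle inequality: $|a_{13}| \leqslant |2a_{12} - a_{13}| + 2|a_{12}| \leqslant 1 + 2 = 3$ by (ii) and (i); $|a_{31}| \leqslant 3$ analogously via (iii) and (i); and $|a_{23}|, |a_{32}|, |a_{33}| \leqslant |2a_{22} - b| + 2|a_{22}| \leqslant 3$ via (vi) and (i).
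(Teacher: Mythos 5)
Your overall strategy -- encode each set by its type $(i,j)$ according to whether its traces on $X_1$ and $X_2$ are empty, proper, or full, note that $f$ depends only on the type, and then enumerate the realisable single-coordinate transitions $(1,1)\to1$, $(1,2)\to2$, $(2,1)\to2$, $(1,3)\to3$, $(3,1)\to3$, $(2,2)\to2$, $(2,2)\to3$ and their products -- is exactly the ``straightforward verification'' the paper has in mind, and your necessity argument and the closing triangle-inequality step for the ``in particular'' clause are fine. The genuine gap is in the sufficiency half: the pivotal claim that the realisable composite triples collapse to \emph{precisely} the nine families (i)--(ix) is asserted rather than checked, and it is false. The composite transitions obtained by pairing $(1,2)\to2$ with $(3,1)\to3$, respectively $(2,1)\to2$ with $(1,3)\to3$ -- that is, the disjoint pairs $A=Y'$, $B=X_1$ and $A=X'$, $B=X_2$ -- yield the inequalities $|a_{12}+a_{31}-a_{32}|\leqslant 1$ and $|a_{21}+a_{13}-a_{23}|\leqslant 1$, which occur nowhere in (i)--(ix). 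Moreover they are not consequences of (i)--(ix): take $a_{12}=a_{13}=a_{21}=a_{22}=a_{32}=1$, $a_{23}=2$, $a_{31}=a_{33}=3$; every condition in the list is satisfied (each relevant quantity equals $0$ or $1$ in absolute value), yet $f(Y')+f(X_1)-f(X_1\cup Y')=1+3-1=3$, so $f$ is not $1$-additive. Consequently the step ``every realisable composite triple has been accounted for'' fails, and your argument cannot deliver the ``if'' direction as stated; carried out completely, your own enumeration shows that the list must be supplemented by these two conditions (they do hold for Pawlik's matrix and for the two matrices singled out later, so the paper's applications are unaffected, but your proof as written does not close).

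A secondary point: your parenthetical treatment of small $k$ is not substantiated. The transition $(2,2)\to2$ inside a single block already requires $k\geqslant 3$, so for $k=2$ the inequalities in (i) and the first two inequalities in (vi) have no witnesses at all; saying they are ``absorbed into one involving the $(2,2)\to3$ transition'' does not explain why a $1$-additive $f$ on an $8$-element... rather, on a $4$-element set must satisfy $|a_{12}|\leqslant1$, and no such implication is immediate. Either restrict the necessity direction to $k\geqslant 3$ or supply an actual derivation for $k=2$; as it stands this part is also only a claim.
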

	\begin{proof}
		Straightforward verification. 
	\end{proof}
	
	Effectively, Pawlik's construction corresponds to the matrix
	$$\left[\begin{smallmatrix} 0 & -1 & -3 \\ 1 & 0 & -1 \\ 3 & 1 & 0 \end{smallmatrix} \right].$$ Having implemented the conditions from Lemma~\ref{lem: oneadd} in Python, we run a simple script that listed for us \emph{all} 1-additive functions of that form that take values from the list $(-3, -2.5, -2, \ldots, 2, 2.5, 3)$. (By Lemma~\ref{lem: oneadd}, the numbers $-3$ and $3$ are extremal values for the range of such functions.) Overall, we found in total 38,034 such functions that are non-zero (excluding those that differ only by the sign, we had only 19,017 functions to investigate after all). Using a~convex optimisation solver SCS (\cite{ocpb:16,scs}), we filtered out those functions whose distance to the space of measures is at least 1.4 in the case $k=4$ (that is, functions on an 8-element set), having found only two:
	
	$$\left[ \begin{smallmatrix} 0 & 1 & 1 \\
	1 & 1 & 3 \\
	1 & 3 & 3\end{smallmatrix} \right], \left[ \begin{smallmatrix} 0 & 1 & 1 \\
	1 & 1 & 3 \\
	3 & 3 & 3\end{smallmatrix} \right].$$
	Obviously, the former one corresponds to functions $f_{k,2}$ that we consider in the present paper.
	
	\subsection*{Acknowledgements.} 
	We would like to thank the anonymous referee for her/his corrections and comments; in particular, for pointing out the slip concerning the relation between $K_w$ and $K$ in the submitted version; the respective  comments have been incorporated into Remark~\ref{remconst}.\smallskip
	
	Moreover, we are indebted to Tomasz Kochanek who has introduced to us the problem of improving the bounds for Kalton's constant. The first-named author's visit to Prague in December 2019, during which some part of the project
	was completed, was supported by funding received from GA\v{C}R project 19-07129Y; RVO
	67985840, which is acknowledged with thanks. The research was supported by the grant SONATA BIS no.~2017/26/E/ST1/00723.

	\bibliographystyle{plain}
	\bibliography{bibliography}
\end{document}